\newtheorem{theorem}{Theorem}[section]
\newtheorem{lemma}[theorem]{Lemma}
\theoremstyle{definition}
\newtheorem{definition}[theorem]{Definition}
\newtheorem{definitions}[theorem]{Definitions}
\newtheorem{example}[theorem]{Example}
\newtheorem{definitions and remarks}[theorem]{Definitions and Remarks}
\newtheorem{notation}[theorem]{Notation}
\theoremstyle{remark}
\newtheorem{remark}[theorem]{Remark}
\newtheorem{remarks}[theorem]{Remarks}
\numberwithin{equation}{section}
\newcommand{\inv}{\mathrm{inv}}
\newcommand{\cosupp}{\mathrm{cosupp}\,}
\newcommand{\ord}{\mathrm{ord}}
\newcommand{\s}{{\sigma}}
\newcommand{\IN}{{\mathbb N}}
\newcommand{\IA}{{\mathbb A}}
\newcommand{\cE}{{\mathcal E}}
\newcommand{\cG}{{\mathcal G}}
\newcommand{\cI}{{\mathcal I}}
\newcommand{\cJ}{{\mathcal J}}
\newcommand{\cM}{{\mathcal M}}
\newcommand{\cO}{{\mathcal O}}
\newcommand{\cR}{{\mathcal R}}
\newcommand{\cS}{{\mathcal S}}
\newcommand{\tH}{{\widetilde H}}
\newcommand{\ucG}{\underline{\cG}}
\newcommand{\ucI}{\underline{\cI}}
\newcommand{\ucJ}{\underline{\cJ}}
\begin{document}
\title[Desingularization avoiding simple normal crossings]{Desingularization by blowings-up avoiding simple normal crossings}

\author[E. Bierstone]{Edward Bierstone}
\address{The Fields Institute, 222 College Street, Toronto, ON, Canada
M5T 3J1, and University of Toronto, Department of Mathematics, 40 St. George Street,
Toronto, Ontario, Canada M5S 2E4}
\email{bierston@fields.utoronto.ca}
\thanks{Research supported in part by NSERC grants OGP0009070,
MRS342058, USRA191085, CGS145594585 and OGP0008949.}

\author[S. Da Silva]{Sergio Da Silva}
\address{University of Toronto, Department of Mathematics, 40 St. George Street,
Toronto, Ontario, Canada M5S 2E4}
\email{sergio.dasilva@utoronto.ca}

\author[P. Milman]{Pierre D. Milman}
\address{University of Toronto, Department of Mathematics, 40 St. George Street,
Toronto, Ontario, Canada M5S 2E4}
\email{milman@math.toronto.edu}

\author[F. Vera Pacheco]{Franklin Vera Pacheco}
\address{The Fields Institute, 222 College Street, Toronto, ON, Canada
M5T 3J1}
\email{franklin.vp@gmail.com}

\subjclass{Primary 14E15, 14J17, 32S45; Secondary 14B05, 32S05, 32S10}

\keywords{resolution of singularities, simple normal crossings, desingularization invariant}

\begin{abstract}
It is shown that, for any reduced algebraic variety in characteristic zero,
one can resolve all but simple normal crossings (snc) singularities by a finite sequence of
blowings-up with smooth centres which, at every step, avoids points where the 
transformed variety together with the exceptional divisor has only 
snc singularities. The proof follows the philosophy of \cite{BMmin}
that the desingularization invariant can be used together with natural geometric
information to compute local normal forms of singularities.
\end{abstract}
\date{\today}
\maketitle
\setcounter{tocdepth}{1}
\tableofcontents


\section{Introduction}\label{sec:intro}
The subject of this article is partial resolution of singularities. 
The main result asserts that, for any (reduced) algebraic variety $X$ in characteristic zero,
we can resolve all but simple normal crossings singularities by a finite sequence of
blowings-up with smooth centres which, at every step, avoids points where the 
corresponding transform of $X$ together with the exceptional divisor has only 
simple normal crossings singularities. For background and motivation of the problem,
see \cite{BMmin}, \cite{BV} and \cite{Kolog}. Our proof follows the philosophy of \cite{BMmin} that the desingularization invariant of \cite{BMinv}, \cite{BMfunct}  
can be used together with natural geometric
information to compute local normal forms of singularities.

Every algebraic variety can be embedded locally in an affine space. An algebraic
variety $X$ has \emph{simple normal crossings} (\emph{snc}) at a point $a$ if,
locally at $a$, there is an embedding $X \hookrightarrow Z$ (where $Z$ is smooth)
and a regular coordinate system $(x_1,\ldots,x_n)$ for $Z$ at $a$ in which each
irreducible component of $X$ is a coordinate hypersurface $(x_i = 0)$, for some 
$i$. Thus snc singularities are singularities of \emph{hypersurfaces}. ($X$ is a \emph{hypersurface} if, locally, $X$ can be defined by a principal ideal sheaf on
a smooth variety).

Our main problem can be reduced to the case that $X$ is a closed hypersurface in 
a smooth variety $Z$, essentially
because the desingularization algorithm of \cite{BMinv}, \cite{BMfunct} blows up non-hypersurface points first. (A detailed argument can be found in \cite{BMmin}, \cite{BV}.) 
We therefore state and prove the main result here only in the case
of an embedded hypersurface (Theorem \ref{thm:main}).

\begin{definitions}\label{def:snc}
Let $X \hookrightarrow Z$ denote an embedded hypersurface ($Z$ smooth) and
let $E$ denote a divisor on $Z$. We say that $(X,E)$ has (or is) \emph{simple
normal crossings} (\emph{snc}) at a point $a \in Z$ if $Z$ admits a regular system of
coordinates $(x_1,\ldots, x_p, y_1,\ldots, y_q)$ (where $p \geq 0$) at $a$, in which
the irreducible components of $X$ at $a$ are $(x_i = 0)$, $i=1,\dots,p$, and the support 
of each component of $E$ at $a$ is $(y_j=0)$, for some $j$. We also say that $X$
(respectively, $E$) is \emph{snc} at $a$ if $(X,\emptyset)$ (respectively, 
$(\emptyset, E)$)
is snc at $a$. The pair $(X,E)$ is \emph{snc} if it is snc at every point.
\end{definitions}

Consider $X \hookrightarrow Z$ and $E$ as in Definitions \ref{def:snc}. We consider
the support of $E$ as a divisor $\sum_{j=1}^r H_j$ (where each component $H_j$ is
a closed hypersurface in $Z$, not necessarily irreducible), or, equivalently, as the
collection $\{H_1,\ldots, H_r\}$. The results in this article involve only the support of $E$, 
so we will write $E = \sum_{j=1}^r H_j$ or $E = \{H_1,\ldots, H_r\}$, by abuse of notation.

\begin{notation}\label{not:seq}Given a sequence of blowings-up
\begin{equation}\label{eq:blup}
Z = Z_0 \stackrel{\s_1}{\longleftarrow} Z_1 \longleftarrow \cdots
\stackrel{\s_{t}}{\longleftarrow} Z_{t}\,,
\end{equation}
where each $\s_{j+1}$ has smooth centre $C_j \subset Z_j$, we write
$X_0 = X$, $E_0 = E$, and, for each $j = 0,1,\ldots$\,, we set
\begin{enumerate}
\item[]
\begin{enumerate}
\item[$X_{j+1} :=$]  strict transform of $X_j$,
\smallskip
\item[$E_{j+1} :=$]  collection of strict transforms of the components of $E_j$
together with the exceptional divisor $\s_{j+1}^{-1}(C_j)$ of $\s_{j+1}$. 
\end{enumerate}
\end{enumerate}
This notation will be used throughout the article.
\end{notation}

\begin{definition}\label{def:admiss}
A blowing-up $\s: Z' \to Z$ is \emph{admissible} (for $(X,E)$) 
if the centre $C$ of $\s$ is smooth, $C \subset X$
and $C$ is \emph{snc with respect to} $E$ (where the latter means that,
locally in regular coordinates, $C$ is a coordinate subspace and the support of each
component of $E$ is a coordinate hypersurface. Note that $C$ may lie in certain
components of $E$.)
The sequence of blowings-up \eqref{eq:blup} is called \emph{admissible}
if each $\s_{j+1}$ is admissible for $(X_j,E_j)$. 
\end{definition}

If $E_j$ is snc and $\s_{j+1}$ is admissible, then $E_{j+1}$ is snc. 

\begin{theorem}\label{thm:main}
Let $X \hookrightarrow Z$ denote an embedded hypersurface in characteristic zero and
let $E$ denote a snc divisor on $Z$. Then there is a sequence of admissible
blowings-up \eqref{eq:blup} such that
\begin{enumerate}
\item $(X_t, E_t)$ has only snc singularities,
\item each $\s_{j+1}$ is an isomorphism over the locus of snc points of $(X_j,E_j)$.
\end{enumerate}
Moreover, the association of the desingularization sequence \eqref{eq:blup} to 
$(X,E)$ is functorial with respect to 
smooth morphisms that preserve the number of irreducible components of $X$
at every point.
\end{theorem}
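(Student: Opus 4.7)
My plan is to follow the philosophy of \cite{BMmin}: combine the desingularization invariant $\inv$ of \cite{BMinv,BMfunct} with the geometric snc condition of Definitions \ref{def:snc} to choose admissible centres. The standard algorithm of \cite{BMinv} produces an admissible sequence after which $(X_t,E_t)$ is snc, but it may blow up snc points along the way. The strategy is, at each stage $j$, to replace the maximum locus of $\inv$ on $X_j$ by its non-snc part, and to verify that this modified centre is still smooth and admissible, avoids snc points, and forces the process to terminate.

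Concretely, set $S_j$ to be the maximum locus of $\inv$ on $X_j$, let $U_j$ denote the snc locus of $(X_j,E_j)$, and consider the candidate centre $C_j := \overline{S_j \setminus U_j}$. The core technical work is to show that (i) $C_j$ is smooth, (ii) $C_j$ is snc with respect to $E_j$, and (iii) $C_j \subseteq X_j$. The natural tool is local normal forms at each point of $S_j$. At an snc point $a \in S_j$, the structure of $\inv$ together with the snc normal form from Definitions \ref{def:snc} pins down the local shape of $S_j$ near $a$, and from this one can describe $\overline{S_j \setminus U_j}$ explicitly near $a$: either it avoids $a$, or it extends across $a$ as a smooth coordinate subspace that is snc with $E_j$. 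The main obstacle I expect is to carry out this normal form analysis uniformly enough to handle all the ways the snc locus can meet the maximum locus, especially configurations in which several irreducible components of $X_j$ contribute different normal forms along $C_j$ and one must patch them into a single smooth centre.

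For termination I would refine $\inv$ to a lexicographic invariant $\inv^{\natural}$ whose extra coordinates encode how far $(X_j,E_j)$ is from snc at $a$ (for instance, the number of components of $X_j$ through $a$ violating the snc pattern, together with an auxiliary order attached to those components, as in \cite{BMmin}). The cleaned blowing-up of $C_j$ strictly decreases $\inv^{\natural}$ at every point in the fibre, and since the values of $\inv$ lie in a well-ordered set and the additional data take only finitely many values, the process must stop; at that point no non-snc points remain, which is condition (1). Condition (2) is automatic, since $C_j$ avoids $U_j$ by construction.

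Finally, functoriality follows as in \cite{BMfunct}: $\inv$, the snc locus, and the refinement $\inv^{\natural}$ are all preserved under smooth morphisms that preserve the number of irreducible components of $X_j$ at each point. This hypothesis is exactly what is needed, because whether a local branch of $X_j$ is counted as one or several components enters directly into Definitions \ref{def:snc}, and hence into the definition of $U_j$ and of the extra coordinates of $\inv^{\natural}$.
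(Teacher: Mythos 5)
There is a genuine gap, and it sits exactly where you flag ``the main obstacle'': the candidate centre $C_j := \overline{S_j \setminus U_j}$ is not smooth in general, and no normal-form analysis will make it so. Since $S_j$ is closed and $U_j$ is open, $S_j\setminus U_j$ is already closed, and on a component of $S_j$ that is \emph{generically} snc the non-snc locus is (as the paper's Lemma \ref{lem:norm} and Theorem \ref{thm:char} show) the union of the loci where the exponents $\mu_{H,k+1}$ are positive, i.e.\ a union of the divisors $N^k\cap H$ inside the maximal contact subspaces. A union of several such divisors is at best normal crossings, not smooth, so blowing it up in one step is not an admissible blowing-up. The paper avoids this by splitting the work in two: it blows up only \emph{entire} components of $(\inv=\inv_{p,s})$ consisting of non-snc points, and then removes the residual non-snc points on the generically-snc components by \emph{cleaning} --- desingularization of the monomial marked ideal $\underline{\cM}(\ucI^k)$ --- whose centres are the individual smooth pieces $N^k\cap H$ taken one at a time in an invariant order (Definition \ref{def:clean}, Remarks \ref{rem:clean}). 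Note also that your construction only becomes delicate once the maximum of $\inv$ reaches a special value $\inv_{p,s}$; by Lemma \ref{lem:snc}, for non-special maximum values the max locus contains no snc points and your $C_j$ coincides with the standard centre. You never isolate this dichotomy, which requires the characterization of snc in terms of $\inv_{p,s}$, $\Sigma_p$ and the $\mu_{H,k}$ (Theorem \ref{thm:char}) --- the paper's key technical input, absent from your argument.

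The termination argument is also unsubstantiated. The cleaning blowings-up are admissible but not $\inv$-admissible, so after them $\inv$ is not defined by the standard recursion at all; one cannot simply assert that a refinement $\inv^{\natural}$ ``strictly decreases in the fibre.'' This is precisely the issue addressed by Lemma \ref{lem:clean} and Remark \ref{rem:why}: the paper constructs a modified $\inv$ over the particular cleaning sequences (exploiting that the associated multiplicities are $1$, Remark \ref{rem:mult1}, so each cleaning centre has codimension one in $N^k$ and the maximal contact subspaces survive), which guarantees that after cleaning the remaining values of $\inv$ are strictly below $\inv_{p,s}$. Termination then follows from finiteness of the set $\cS$ of special values, not from a well-ordering of an ad hoc refinement. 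Without this construction, restarting $\inv$ in ``year zero'' after a cleaning blow-up could reintroduce values $\inv_{p',s'}>\inv_{p,s}$ and the induction would not close.
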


Condition (2) of the theorem implies that each centre $C_j \subset X_j$.

See \cite[Section 12]{BMinv} for an earlier approach to Theorem \ref{thm:main}.
Weaker versions of Theorem \ref{thm:main}, where (2) is replaced by the
condition that the morphism $\s_t \circ \cdots \circ \s_1$ is an isomorphism over
the snc locus of $(X,E)$, can also be found in \cite{Sz}, \cite{Kolog} and
\cite[Thm.\,3.4]{BMmin} (the latter is functorial as in Theorem \ref{thm:main}).
Our proof of Theorem \ref{thm:main} uses the desingularization invariant
$\inv = \inv_{(X,E)}$, in the spirit of \cite{BMmin}. Theorem \ref{thm:main} is an
important special case of the main theorem of \cite{BV} and is used in the proof of the
latter (see \cite[Rmk.\,1.3(4)]{BV}).

We refer to the Appendix of \cite{BMmin}, \emph{Crash course on the desingularization
invariant}, for the definition of $\inv$ and how to compute it (see also \cite{BMinv},
\cite{BMfunct}), though some of the ideas will be recalled in Section 2 below. 
The invariant $\inv$
is defined iteratively over a sequence of blowings-up \eqref{eq:blup} which are 
$\inv$-\emph{admissible} (where the latter means that the successive blowings-up
are admissible and $\inv$ is constant on each component of a centre).

A key ingredient in our proof of Theorem \ref{thm:main} is a characterization of
snc (in any \emph{year} of the desingularization history \eqref{eq:blup}) in terms of
certain special values $\inv_{p,s}$ of $\inv$ (Definition \ref{def:specialinv}). 
The blowings-up in 
Theorem \ref{thm:main} are obtained by following the standard desingularization
algorithm \cite{BMinv}, \cite{BMfunct} until the maximum value of $\inv$ becomes
one of the special values $\inv_{p,s}$. When $\inv$ attains a value $\inv_{p,s}$
at a point $a$, we can write a local normal form for the singularity (Lemma \ref{lem:norm})
and then simplify the normal form by a sequence of invariantly-defined \emph{cleaning}
blowings-up (cf. \cite[Sect.\,2]{BMmin}) until the singularity becomes snc. Cleaning
blowings-up are admissible but not, in general, $\inv$-admissible. Nevertheless,
a modified version of $\inv$ can be defined over the particular cleaning
sequences needed in this article (Section 4). This is enough to be able to repeat
the preceding process on the complement of the snc locus until finally we resolve
all but snc singularities. 

Section 2 includes an example that illustrates the difference between
the algorithm of Theorem \ref{thm:main} and the weaker version of \cite[Thm.\,3.4]{BMmin}
or the standard desingularization algorithm of \cite{BMinv}.


\section{The desingularization invariant and examples}\label{sec:inv}
The {\it Crash course on the desingularization invariant} \cite[Section 5]{BMmin} is a
prerequisite for our proof of Theorem \ref{thm:main}, and should be consulted for the
notions of \emph{maximal contact}, \emph{coefficient ideal}, \emph{companion ideal},
etc. We will recall a few of the ideas involved in order to fix notation.

Resolution of singularities in characteristic zero can be realized essentially 
by choosing, as each successive centre of blowing up, the locus of maximal values of the desingularization
invariant $\inv = \inv_{(X,E)}$ \cite{BMinv}, \cite{BMfunct}. The invariant $\inv$ is
defined iteratively over a sequence of $\inv$-admissible blowings-up \eqref{eq:blup}.
In particular, if $a \in Z_j$, then $\inv(a)$ depends on the previous blowings-up. 

Let $a \in Z_j$. Then $\inv(a)$ has the form
\begin{equation}\label{eq:inv}
\inv(a) = (\nu_1(a), s_1(a), \ldots, \nu_q(a), s_q(a), \nu_{q+1}(a))\,,
\end{equation}
where $\nu_k(a)$ is a positive rational number if $k\leq q$, each $s_k(a)$
is a nonnegative integer, and $\nu_{q+1}(a)$ is either $0$ (the order of the unit
ideal) or $\infty$ (the order of the zero ideal). The first entry
$\nu_1(a)$ is the order $\ord_a X_j$ of $X_j$ at $a$. If $a \in Z_0$ (i.e, in ``year
zero''), $s_1(a)$ is the number of components of $E$ at $a$.
The successive pairs $(\nu_k(a),s_k(a))$ are defined inductively over
\emph{maximal contact} subvarieties of increasing codimension. 
$\inv(a) = (0)$
if and only if $a \in Z_j\setminus X_j$. 

We order finite sequences of the form (\ref{eq:inv}) lexicographically.
Then $\inv(\cdot)$ is upper-semicontinuous on each $Z_j$, and 
\emph{infinitesimally upper-semicontinuous}, i.e., if $a \in Z_j$,
then $\inv(\cdot) \leq \inv(a)$ on $\s_{j+1}^{-1}(a)$. 

We also introduce truncations of $\inv$. Let $\inv_{k+1}(a)$ denote
the truncation of $\inv(a)$ after $s_{k+1}(a)$ (i.e., after the $(k+1)$st 
pair), and let $\inv_{k+1/2}(a)$ denote the truncation of $\inv(a)$ after
$\nu_{k+1}(a)$. 

Given $a \in Z_j$, let $a_i$ denote the image of $a$ in $Z_i$, $i\leq j$.
(We will speak of \emph{year} $i$ in the history of blowings-up). 
The \emph{year of birth} of 
$\inv_{k+1/2}(a)$ (or $\inv_{k+1}(a)$) denotes the smallest $i$ such that
$\inv_{k+1/2}(a) = \inv_{k+1/2}(a_i)$ (respectively, $\inv_{k+1}(a) =
\inv_{k+1}(a_i)$). 

Let $a \in Z_j$. Let $E(a)$ denote the set of components of $E_j$ which
pass through $a$. The entries $s_k(a)$ of $\inv(a)$ are the sizes of certain 
subblocks of $E(a)$: Let $i$ denote the birth-year of 
$\inv_{1/2}(a) = \nu_1(a)$, and let $E^1(a)$ denote the collection of
elements of $E(a)$ that are strict transforms of components of $E_i$
(i.e., strict transforms of elements of $E(a_i)$). Set $s_1(a) :=
\#E^1(a)$. We define $s_{k+1}(a)$, in general, by induction on $k$: Let $i$ denote
the year of birth of $\inv_{k+1/2}(a)$ and let $E^{k+1}(a)$ denote the 
set of elements of $E(a) \setminus \left(E^1(a) \cup \cdots \cup E^k(a)\right)$
that are strict transforms of components of $E_i$. Set $s_{k+1}(a) :=
\#E^{k+1}(a)$. Relative to $\inv_{k+1/2}(a)$, the elements of $E^{k+1}(a)$ are ``old"
components of the exceptional divisor, and 
$\cE^{k+1}(a) := E(a) \setminus \left(E^1(a) \cup \cdots \cup E^{k+1}(a)\right)$ is the
set of ``new'' components.

The successive pairs in $\inv(a)$ are calculated using ``marked ideals'' --- collections
of data that are computed iteratively on ``maximal contact'' subspaces of increasing
codimension. A \emph{marked ideal} $\ucI$ is a quintuple $\ucI = (Z,N,E,\cI,d)$, 
where $Z \supset N$ are smooth varieties,
$E = \sum_{i=1}^s H_i$ is a simple normal crossings divisor on $Z$ which is
tranverse to $N$,
$\cI \subset \cO_N$ is an ideal, and $d \in \IN$. (See \cite[Defns.\,5.5]{BMmin}.)

The \emph{cosupport} of $\ucI$,
$\cosupp \ucI := \{x \in N:\, \ord_x\cI \geq d\}$.
We say that $\ucI$ is of \emph{maximal
order} if $d = \max\{\ord_x \cI: x \in \cosupp \ucI\}$. 

A blowing-up $\s: Z' \to Z$ (with smooth centre $C$) is
\emph{$\ucI$-admissible} (or simply \emph{admissible}) if
$C \subset \cosupp \ucI$, and
$C$, $E$ have only normal crossings.
The \emph{(controlled) transform} of $\ucI$ by an admissible blowing-up
$\s: Z' \to Z$ is the marked ideal $\ucI' = (Z',N',E',\cI',d'=d)$,
where $N'$ is the strict transform of $N$ by $\s$,
$E' = \sum_{i=1}^{s+1} H_i'$ (where $H_i'$ is the strict transform
of $H_i$, for each $i=1,\ldots,s$, and $H'_{s+1}$ is the exceptional divisor
$\s^{-1}(C)$ of $\s$),
$\cI' := \cI_{\s^{-1}(C)}^{-d}\cdot \s^*(\cI)$ (where $\cI_{\s^{-1}(C)}
\subset \cO_{N'}$ denotes the ideal of $\s^{-1}(C)$).
In this definition, note that $\s^*(\cI)$
is divisible by $\cI_{\s^{-1}(C)}^d$ and $E'$ is a normal crossings 
divisor transverse to $N'$, because $\s$ is admissible.

Computation of $\inv$ and of the centre of blowing up begins,
in year zero, with the marked ideal $\ucI^0 = \ucI_X := (Z,Z,E,\cI_X,1)$,
where $\cI_X \subset \cO_Z$ is the ideal of $X$. Consider a point $a \in X_j$, 
in an arbitrary year $j$. We will simplify notation by writing $Z,\,X,\,E$, etc.,
instead of $Z_j,\,X_j,\,E_j$, etc., when the year is understood. At $a$, the
computation begins with $\ucI^0 = (Z,Z,\cE,\cI^0,1)$, restricted to
some neighbourhood $U$ of $a$, where $\cE = \cE(a) = E(a)$ and $\cI^0$ denotes 
the transform of $\cI_{X_0}$ (iterating the definition above). 
Then $\cI^0$ equals the product of $\cI_X = \cI_{X_j}$ 
and a monomial in generators of the
ideals of the elements of $\cE$. 

We pass from $\ucI^0$ to the \emph{companion ideal} $\ucJ^0 = \ucG(\ucI^0)$,
which is obtained from the product decomposition 
$\cI^0 = \cM(\ucI^0)\cdot\cR(\ucI^0)$, where $\cM(\ucI^0)$ is the \emph{monomial
part} of $\cI^0$ (it is a monomial in exceptional divisors) and $\cR(\ucI^0)$ is
the \emph{residual part} of $\cI^0$ (divisible by no exceptional divisor).
See \cite[\S\S5.6,\,5.8]{BMmin}. Here, clearly 
$\cR(\ucI^0) = \cI_X$ and, since the associated multiplicity of $\ucI^0$ 
is $1$, 
$\ucJ^0 = (\cE, \cR(\ucI^0), d) = (\cE, \cI_X, d) := (Z,Z,\cE, \cI_X, d)$, where
$d = \ord_a\cI_X$. We set  $inv_{1/2}(a) = \nu_1(a) = d$.

Since $\ucJ^0$ is of maximal order, we can pass to the \emph{coefficient ideal
plus boundary} $\ucI^1 = (Z, N^1, \cE^1, \cI^1, d^1)$ defined on a 
\emph{maximal contact hypersurface} $N^1$ of $Z$ 
(in some neighbourhood of $a$); see \cite[\S5.9]{BMmin}. Here $\cE^1 = \cE^1(a)
= E(a)\setminus E^1(a)$. The \emph{boundary} is the marked ideal determined by
the \emph{old} components $E^1(a)$ of the exceptional divisor $E(a)$. 
The maximal contact hypersurface
is transformed from the year of birth of $\inv_{1/2}(a)$, so it is transverse
to the set of \emph{new} components $\cE^1(a)$ of $E(a)$. It need not be tranverse
to $E^1(a)$; the boundary is therefore added to the coefficient ideal to ensure
the centre of blowing up will lie inside the old components.

Iterating the process, we will get a coefficient ideal plus boundary
$\ucI^k = (Z,N^k,\cE^k,\cI^k,d^k)$ defined
on a maximal contact subspace $N^k$ of codimension $k$ in $Z$, in a neighbourhood
of $a$. The boundary here is given by $E^k(a)$, and
$\cE^k = \cE^k(a) :=
\cE^{k-1}(a)\setminus E^k(a) = E(a) \setminus E^1(a) \cup \cdots \cup E^k(a)$.

We again write $\cI^k$ as the product $\cM(\ucI^k)\cdot\cR(\ucI^k)$ of its
monomial and residual parts. $\cM(\ucI^k)$ is the monomial part with respect
to $\cE^k$; i.e., the product of the ideals
$\cI_H$, $H \in \cE^k(a)$, each to the power $\ord_{H,a}\cI^k$,
where $\ord_{H,a}$ denotes the order along $H$ at $a$. 
We set $\nu_{k+1}(a) := \ord_a \cR(\ucI^k)/d^k$ and 
$\mu_{H,k+1}(a) := \ord_{H,a}\cI^k/d^k$, $H \in \cE^k(a)$; both are invariants
of the \emph{equivalence class} of $\ucI^k$ and $\dim N^k$
(see \cite[Def.\,5.10]{BMmin}). The iterative construction terminates when
$\nu_{k+1}(a) = 0$ or $\infty$.

In Section \ref{sec:char} (see Lemma \ref{lem:snc}), we will show that, at
an snc point in any year of the resolution history, the invariant
takes a special form:  

\begin{definition}\label{def:specialinv}
Given $p \in \IN$, $d\geq1$ and $s=(s_1,...,s_d)$ with each $s_k \in \IN$, 
set
$$ 
\inv_{p,s}:=(p,s_1,1,s_2,\ldots,1,s_d,1,0,\ldots,1,0,\infty),
$$
where the total number of pairs (before $\infty$) is
$$ 
r:= p+|s|,\quad |s| := \sum_{k=1}^ds_k.
$$
\end{definition}

In particular, if $E = \emptyset$, then, at an snc point in year zero,
$\inv = \inv_{p,0} = (p,0,1,0,\dots,1,0,\infty)$ with $p$ pairs. 
The algorithm of \cite[Thm.\,3.4]{BMmin} depends
on a characterization of points with $\inv = \inv_{p,0}$.

The following example distinguishes the algorithm of 
Theorem \ref{thm:main} (which we will call Algorithm C) from those of
the weaker result \cite[Thm.\,3.4]{BMmin} (Algorithm B) and the
standard desingularization algorithm \cite{BMinv}, \cite{BMfunct} (Algorithm
A). See also Example \ref{ex2}. The table below provides
the computations of marked ideals
needed to find the invariant and the centre $C$ of the blowing-up
at the origins of the charts corresponding to the coordinate substitutions
indicated. The calculations at a given point provide the next centre
of blowing up over a neighbourhood of that point; globally, the maximum
locus of the invariant will be blown up first.

In each subtable, the passage from $\ucJ^k$ to $\ucI^{k+1}$ is given
by taking the coefficient ideal plus boundary, on the maximal contact
subspace of codimension $k+1$.

\begin{example}\label{ex1}
Consider the hypersurface $X \hookrightarrow  \IA^3$ given by 
$(z^3+xy = 0)$, together with the empty divisor $E = \emptyset$. The 
following table computes blowings-up given by the standard resolution
algorithm A.
\smallskip

\renewcommand{\arraystretch}{1.5}
\begin{longtable}{c | c | c | c | c}
\hline
 codim & marked ideal & companion ideal  & maximal & boundary\\[-.25cm]
$i$ & $\ucI^i$ & $\ucJ^i = \ucG(\ucI^i)$ & contact & $E^i$ \\\hline
\multicolumn{5}{ c }{}\\[-.5cm]
\multicolumn{5}{ l }{Year zero.}\\[.1cm]\hline
0 & $(z^3+xy,1)$ & $(z^3+xy,2)$ & $(x=0)$&\\\hline
1 & $((z^3,y^2),2)$ & $((z^3,y^2),2)$        & $(x=y= 0)$&\\\hline
2 & $(z^3,2)$ & $(z^3,3)$ & $(x=y=z=0)$&\\\hline
3 & $(0)$ &&&\\\hline
\multicolumn{5}{ l }{$\inv(0) = (2,0,1,0,3/2,0,\infty)$,\,  $C_0 = \{0\}$}\\
\multicolumn{5}{ c }{}\\[-.4cm]
\multicolumn{5}{ l }{Year one.\,  Coordinate chart $(xz,yz,z)$}\\[.1cm]\hline
0 & $(z(z+xy),1)$ & $(z+xy,1)$ & $(z=0)$& $(z=0)$\\\hline
1 & $(xy,1)$ & $(xy,2)$        & $(x=z=0)$&\\\hline
2 & $(y,1)$ & $(y,1)$ & $(x=y=z=0)$&\\\hline
3 & $(0)$ &&&\\\hline
\multicolumn{5}{ l }{$\inv(0) = (1,1,2,0,1,0,\infty)$,\,  $C_1 = \{0\}$}\\
\multicolumn{5}{ c }{}\\[-.4cm]
\multicolumn{5}{ l }{Year two.\,  Coordinate chart $(x,xy,xz)$}\\[.1cm]\hline
0 & $(xz(z+xy),1)$ & $(z+xy,1)$ & $(z=0)$& $(z=0)$\\\hline
1 & $(xy,1)$ & $(y,1)$        & $(y=z=0)$& $(x=0)$\\\hline
2 & $(x,1)$ & $(x,1)$ & $(x=y=z=0)$&\\\hline
3 & $(0)$ &&&\\\hline
\multicolumn{5}{ l }{$\inv(0) = (1,1,1,1,1,0,\infty)$,\,  $C_2 = \{0\}$}\\
\multicolumn{5}{ c }{}\\[-.4cm]
\multicolumn{5}{ l }{Year three.\,  Coordinate chart $(x,xy,xz)$}\\[.1cm]\hline
0 & $(x^2z(z+xy),1)$ & $(z+xy,1)$ & $(z=0)$& $(z=0)$\\\hline
1 & $(xy,1)$ & $(y,1)$        & $(y=z=0)$&\\\hline
2 & $(0)$ &&&\\\hline
\multicolumn{5}{ l }{$\inv(0) = (1,1,1,0,\infty)$,\,  $C_3 = \{y=z=0\}$}\\
\end{longtable}

Note that none of the invariants computed are special values of the
form $\inv_{p,0}$. Therefore, Algorithm B coincides with A 
at each of the steps shown. In year three, the centre $C_3$
includes snc points (though not snc points that were present in year
zero), so that both Algorithms A and B blow up snc points in year three.

On the other hand, in year two, $\inv(0)$ is a special value $\inv_{p,s}$,
 where $p=1$ and $s=(1,1)$. At this point, Algorithm C continues
with a cleaning blowing-up, with centre $(x=z=0)$, after which we have
simple normal crossings over the year-two chart.
\end{example}


\section{Characterization of simple normal crossings}\label{sec:char}
Consider $X \hookrightarrow Z$ and $E$ as in Definitions \ref{def:snc}, and
the sequence of $\inv$-admissible blowings-up given by the standard desingularization
algorithm. Lemma \ref{lem:snc} below asserts that $\inv$ takes a special form
$\inv_{p,s}$ at an snc point in any year of the resolution history. The converse is not
true. For example, if $X=(x_1^n+...+x_n^n=0)$ and $E = \emptyset$, then 
$\inv(0)=(n,0,1,0,1,....,0,\infty)=\inv_{n,0}$ (in year zero). However, if we make the
additional assumption that $X$ (or its strict transform in a given year) has $p$ irreducible
components at a point $a$ with $\inv(a) = \inv_{p,s}$, then we can write local normal forms
for the components of $X$ and $E$ at $a$ (Lemma \ref{lem:norm}) and we can
characterize snc using $\inv$ and the additional invariants 
$\mu_{H,k}(a)$ \cite[Def.\,5.10]{BMmin}
(Theorem \ref{thm:char} below). We begin by stating all three results. The proofs of Lemmas \ref{lem:snc} and \ref{lem:norm} follow parallel arguments, so we give them together.

\begin{lemma}\label{lem:snc}
Suppose $(X,E) = (X_m,E_m)$ is snc at a point $a$, in some year $m$ of the
resolution history \eqref{eq:blup}.
Then $\inv(a)$ is of the form $\inv_{p,s}$ where $r = p + |s| \leq n$ (see Definition
\ref{def:specialinv}). Moreover, the invariants $\mu_{H,i+1}(a)=0$, for all 
$i \geq 1$ and $H \in E(a)$.
\end{lemma}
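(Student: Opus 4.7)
The plan is to carry out a direct computation of $\inv(a)$ in local snc coordinates, peeling off one $x$-coordinate at each codimension level of the iterative coefficient-ideal construction.

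First I would choose regular coordinates $(x_1,\ldots,x_p,y_1,\ldots,y_q)$ at $a$ for which the irreducible components of $X_m$ at $a$ are $(x_i = 0)$ for $i = 1,\ldots,p$ and each component of $E_m$ through $a$ is of the form $(y_j = 0)$. Then $\cI_X = (x_1\cdots x_p)$ locally, so $\ord_a\cI_X = p$. Writing $\cI^0 = \cI_X\cdot\cM_0$, where $\cM_0$ is a monomial in the $y_j$'s inherited from the history of blowings-up, we have $\cR(\ucI^0) = (x_1\cdots x_p)$ of order $p$, giving $\nu_1(a) = p$ and $\ucJ^0 = (\cE,(x_1\cdots x_p),p)$.

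Next I would show, by induction on $k \geq 1$, that the codimension-$k$ maximal contact subspace may be taken as $N^k = (x_1 = \cdots = x_k = 0)$, and that the residual part of $\ucI^k$ is $(x_{k+1}\cdots x_p)$ with marked order $p-k$. The inductive step rests on three elementary facts. The coefficient ideal of $((x_k\cdots x_p), p-k+1)$ on $(x_k = 0)$ equals $((x_{k+1}\cdots x_p), p-k)$, because the only nonvanishing $x_k$-derivative of $x_k\cdots x_p$ of order less than $p-k+1$ is the first one, equal to $x_{k+1}\cdots x_p$. The subspace $N^k$ is automatically transverse to every component of $\cE^{k-1}(a)$, since $N^k$ is cut out by $x$-coordinates while every element of $E(a)$ is cut out by a $y$-coordinate. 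Finally, both the boundary added at step $k$ and the monomial factor extracted by the companion operation are monomials in the $y_j$'s and hence do not perturb the residual in the $x$-directions. Together these facts give $\nu_{k+1}(a) = 1$ for every $k \geq 1$. Moreover, for any $H = (y_j = 0) \in \cE^k(a)$, the ideal $\cI^k$ is a product of $(x_{k+1}\cdots x_p)$ with a monomial in old $y$-coordinates distinct from the defining function of $H$; hence $\ord_{H,a}\cI^k = 0$ and $\mu_{H,k+1}(a) = 0$.

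Once the $x$-part is exhausted, the residual becomes the zero ideal and this manifests as the terminal $\infty$ entry of $\inv(a)$; the intermediate pairs $(1, s_k)$ together with the trailing $(1, 0)$ pairs encode the partition of the old-exceptional components of $E(a)$ into the subsets $E^1(a), E^2(a), \ldots$ dictated by the birth years of the successive $\nu_k$ entries in the history \eqref{eq:blup}. Since $|s| = \sum_k s_k \leq \#E(a) \leq q$ and $p + q = n$, we conclude $r = p + |s| \leq n$. The main obstacle I anticipate is the bookkeeping of the boundary and companion operations in a year $m$ with nontrivial blowing-up history: one must verify that every monomial-in-$y$ contribution cleanly separates into an old part absorbed by the boundary and a new part extracted by the companion construction, so that the residual at each codimension remains pure in the $x$-coordinates, and that the birth-year pattern of the $\nu$-entries matches the splitting of $E(a)$ into the sets $E^k(a)$ in precisely the form $\inv_{p,s}$ asserted by the lemma.
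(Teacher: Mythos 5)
Your overall strategy---computing $\inv(a)$ directly in snc coordinates and peeling off one maximal contact hypersurface per codimension level---is the same as the paper's, but your central inductive claim is wrong in a way that breaks the count of pairs in $\inv(a)$. You assert that the residual part of $\ucI^k$ is the principal ideal $(x_{k+1}\cdots x_p)$, pure in the $x$-coordinates, and that the iteration terminates with the zero ideal once the $x$-part is exhausted, i.e., after $p$ levels. If that were so, $\inv(a)$ would have only $p$ pairs before the terminal $\infty$, whereas the lemma asserts $r = p + |s|$ pairs. The missing mechanism is the \emph{boundary}: at each level $k$ the generators $u_k^j$ of the ideals of the \emph{old} exceptional components $E^k(a)$ are adjoined to the coefficient ideal as honest generators (this is the ``coefficient ideal plus boundary'' $\ucI^k$); they are \emph{not} part of the monomial factor $\cM(\ucI^k)$, which is a monomial only in the \emph{new} components $\cE^k(a)$; so they persist in the residual part and subsequently serve as maximal contact hypersurfaces at $|s|$ further levels. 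Your statement that the boundary added at step $k$ is a monomial in the $y_j$'s and hence does not perturb the residual in the $x$-directions conflates the boundary (old components, added to the ideal) with the monomial part (new components, factored out). A concrete failure: for $p=1$ and $E(a)=E^1(a)=\{H\}$, your computation would yield $\inv(a)=(1,1,\infty)$ with one pair, but the correct value is $\inv_{1,(1)}=(1,1,1,0,\infty)$ with two pairs, the second pair coming from the boundary generator $u_1^1$ used as maximal contact in codimension $2$.

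A secondary inaccuracy: the coefficient ideal of $((x_k\cdots x_p),\,p-k+1)$ on $(x_k=0)$ is not literally $((x_{k+1}\cdots x_p),\,p-k)$, and your justification is false as stated---the construction uses all derivatives up to order $d-1$, including mixed partials such as $\partial^2(x_1\cdots x_p)/\partial x_1\partial x_2 = x_3\cdots x_p$, which do not vanish on $(x_1=0)$. What is true, and what the paper uses (Lemma \ref{lem:equivproductsum}), is that $((f_1\cdots f_p),p)$ is \emph{equivalent} as a marked ideal to $((f_1,\ldots,f_p),1)$, so that each $(f_k=0)$ is a hypersurface of maximal contact and the coefficient ideal plus boundary at each stage is (equivalent to) the ideal generated by the remaining $f_k$'s and the boundary generators $u_i^j$, with multiplicity $1$. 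Repairing your argument amounts to replacing the product-of-$x$'s induction by this sum form and carrying the boundary generators along at each level; the conclusion $\mu_{H,i+1}(a)=0$ then follows essentially as you say, because the residual ideal always contains a generator from the regular coordinate system that is not divisible by the equation of any $H\in\cE^i(a)$.
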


\begin{definition}\label{def:sig}
Let $\Sigma_p = \Sigma_p(X)$ denote the set of points lying in $p$ irreducible 
components of $X$.
\end{definition}

\begin{lemma}\label{lem:norm}
Let $a\in X = X_m$. Assume that $a\in\Sigma_{p}(X)$ and $\inv(a)=\inv_{p,s}$,
$s=(s_1,\ldots,s_d)$. Let $f_k$, $k=1,\ldots,p$, denote generators of the ideals of the components of $X$ at $a$, and let $u_{i}^{j}$, $j=1,\ldots,s_i$, denote generators of the ideals of the elements of $E^i(a)$, $i=1,...,d$. 

Set $r:=p+|s|$. Then there is a bijection $\{1,\ldots,r\}\rightarrow \{f_k,u_{i}^{j}\}$, which
we denote  $l\mapsto g_l$, and a regular system of coordinates $(x_1,\ldots,x_n)$ at 
$a$ ($n\geq r$), such that  
\begin{equation}\label{eq:localform}
\begin{aligned}
 g_1&=f_1=x_1\\
g_l&=\xi_l+x_l\prod_{i=1}^{l-1}m_{i+1},\quad l=2,\ldots,r,
\end{aligned}
\end{equation}
where each $\xi_l$ is in the ideal generated by $(x_1,\ldots, x_{l-1})$ and each $m_{i+1}$ is a monomial in generators of the ideals of the elements $H$ of 
$\mathcal{E}^i(a)= E(a)\setminus E^1(a)\cup ...\cup E^i(a)$, each raised to the power
$\mu_{H,i+1}(a)$. 
\end{lemma}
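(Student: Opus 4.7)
The plan is to prove Lemma \ref{lem:norm} by induction on $r = p + |s|$, in parallel with Lemma \ref{lem:snc}, closely mirroring the iterated coefficient ideal / maximal contact construction underlying $\inv$. In the base case $r = 1$ we must have $p = 1$ and $s = 0$, and the condition $\inv(a) = \inv_{1, 0}$ forces $\ord_a f_1 = 1$; extending $f_1$ to a regular coordinate system with $x_1 := f_1$ gives the result with $g_1 = f_1 = x_1$.

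For the inductive step, I will first use that $\nu_1(a) = p$ together with $a \in \Sigma_p(X)$ forces $\ord_a f_k = 1$ for every $k$, so each $V(f_k)$ is smooth at $a$. Setting $g_1 := f_1$ and picking coordinates with $x_1 = f_1$, the hypersurface $N^1 := V(x_1)$ is maximal contact for the companion ideal $\ucJ^0 = (Z, Z, \cE, \cI_X, p)$. On $N^1$ I will form the coefficient ideal plus boundary $\ucI^1 = (Z, N^1, \cE^1(a), \cI^1, d^1)$; by construction its invariant at $a$ is the tail $(1, s_2, 1, s_3, \ldots, 1, s_d, 1, 0, \ldots, 1, 0, \infty)$ of length $r - 1$. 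The decomposition $\cI^1 = \cM(\ucI^1) \cdot \cR(\ucI^1)$ then isolates $\cM(\ucI^1) = (m_2)^{d^1}$, where $m_2 = \prod_{H \in \cE^1(a)} v_H^{\mu_{H,2}(a)}$ (with $v_H$ a generator of $\cI_H$), and a residual $\cR(\ucI^1)$ of order exactly $d^1$ at $a$, defining (to the $d^1$-th power) a smooth hypersurface through $a$ in $N^1$.

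I will then apply the inductive hypothesis on $N^1$ to obtain coordinates $(x_2, \ldots, x_n)$ and generators $\tilde g_2, \ldots, \tilde g_r$ --- in bijection with $\{f_k\}_{k \geq 2} \cup \{u_i^j\}$ --- satisfying $\tilde g_2 = x_2$ and $\tilde g_l = \tilde \xi_l + x_l \prod_{i=2}^{l-1} \tilde m_{i+1}$ for $l \geq 3$. Each $\tilde g_l$ should be the reduction modulo $(x_1)$ of a corresponding generator $g_l \in \cO_{Z, a}$, yielding $g_l = \tilde g_l + x_1 h_l$ for some $h_l$. Inserting the factor $m_2$ (which involves no $x_1$) and absorbing $x_1 h_l$ into $\xi_l$ will give the claimed form $g_l = \xi_l + x_l \prod_{i=1}^{l-1} m_{i+1}$.

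The main obstacle I expect is transferring the inductive hypothesis from the pair $(X, E)$ to the marked ideal $\ucI^1$ on $N^1$: because $\ucI^1$ is a marked ideal rather than a hypersurface-divisor pair, either the lemma must be reformulated in sufficient generality to encompass marked ideals, or one must carefully identify within $\ucI^1$ the restrictions of the remaining components $V(f_k)|_{N^1}$ ($k \geq 2$) together with the old and new exceptional divisors. In particular, verifying that the residual $\cR(\ucI^k)$ at each level of the iteration corresponds (up to the monomial factor $m_{k+1}^{d^k}$ from $\cE^k(a)$) to the restriction of one of the original $f_k$ or $u_i^j$ --- thereby pinning down both the bijection $l \mapsto g_l$ and the correct monomial factors --- is the crux of the argument.
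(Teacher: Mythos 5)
Your overall strategy is the paper's: peel off one maximal contact coordinate at a time, tracking the monomial factors $m_{i+1}$ coming from $\cM(\ucI^i)$. But the step you yourself flag as ``the crux'' --- verifying that at each level the residual part $\cR(\ucI^\ell)$ is generated by $\prod_{i\le\ell} m_{i+1}^{-1}\cdot g|_{N^\ell}$ for the remaining original generators $g\in\{f_k,u_i^j\}$, so that $\nu_{\ell+1}(a)=1$ picks out the next $g_{\ell+1}$ and the next coordinate --- is exactly the content of the lemma, and you do not carry it out. Moreover, the induction on $r$ as you set it up cannot be applied literally: Lemma \ref{lem:norm} is stated for a hypersurface--divisor pair, whereas $\ucI^1$ is a marked ideal $(f_2,\dots,f_p,u_1^1,\dots,u_1^{s_1})|_{N^1}$ with multiplicity $1$ (not a product of order-one factors), and its block decomposition $E^2(a),E^3(a),\dots$ is dictated by birth years in the actual resolution history, not by a fresh ``year zero'' computation on $N^1$; so either the statement must be generalized to marked ideals with prescribed boundary blocks, or the induction abandoned. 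A small additional inaccuracy: $\cR(\ucI^1)$ is not principal in general, so it does not ``define a smooth hypersurface''; what is true is that, having order $d^1=1$, it contains an order-one element among its listed generators.

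The paper closes this gap by a direct iteration rather than an induction, exploiting two facts. First, Lemma \ref{lem:equivproductsum} (equivalence of $((f_1\cdots f_p),p)$ with $((f_1,\dots,f_p),1)$), together with the year of birth of $\inv_{1/2}(a)$, justifies taking $N^1=(f_1=0)$ as maximal contact and shows that the coefficient ideal plus boundary is literally $\ucI^1=(Z,N^1,\cE^1(a),(f_2,\dots,f_p,u_1^1,\dots,u_1^{s_1})|_{N^1},1)$ with associated multiplicity $1$ (Remark \ref{rem:mult1}). Second, because the multiplicity stays equal to $1$ and each new maximal contact $x_{\ell+1}$ is itself one of the (monomial-divided) generators, passing to the next coefficient ideal plus boundary simply restricts the remaining generators to $N^{\ell+1}$ and adjoins $u_{\ell+1}^1,\dots,u_{\ell+1}^{s_{\ell+1}}$; the factorization $\cI^\ell=\cM(\ucI^\ell)\cdot\cR(\ucI^\ell)$ then shows every generator is divisible by $m_{\ell+1}$, and $\nu_{\ell+1}(a)=1$ forces some $\prod_{i=1}^{\ell}m_{i+1}^{-1}\cdot g_{\ell+1}|_{N^\ell}$ to have order one, yielding $g_{\ell+1}=\xi_{\ell+1}+x_{\ell+1}\prod_{i=1}^{\ell}m_{i+1}$ with $\xi_{\ell+1}\in(x_1,\dots,x_\ell)$ directly, with no lifting-modulo-$(x_1)$ step needed. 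You would need to supply this bookkeeping explicitly for your argument to be complete.
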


\begin{theorem}[Characterization of snc]\label{thm:char}
Let $a \in X = X_m$. Then $(X,E)$ is snc at $a$ if and only if
\begin{enumerate}
\item $a\in\Sigma_{p}(X)$, for some $p\geq 1$,
\item $\inv(a)=\inv_{p,s}$, for some $s=(s_1,\ldots,s_d)$,
\item $\mu_{H,i+1}(a) = 0$, for all $i\geq 1$ and all $H \in \cE^i(a)$. 
\end{enumerate}
\end{theorem}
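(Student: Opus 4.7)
The plan is to deduce Theorem~\ref{thm:char} directly from Lemmas~\ref{lem:snc} and~\ref{lem:norm}. For the forward direction, I would observe that if $(X,E)$ is snc at $a \in X$ then, by Definitions~\ref{def:snc}, the irreducible components of $X$ at $a$ are $p \geq 1$ distinct coordinate hyperplanes in some regular system of coordinates, so $a \in \Sigma_p(X)$; Lemma~\ref{lem:snc} then supplies conditions~(2) and~(3).

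For the converse, the first step is to apply Lemma~\ref{lem:norm} using conditions~(1) and~(2) to obtain a regular system of coordinates $(x_1,\ldots,x_n)$ at $a$ and local generators $g_1,\ldots,g_r$ of the ideals of the components of $X$ at $a$ and of the supports of the components of $E(a)$ satisfying
\begin{equation*}
g_1 = x_1, \qquad g_l = \xi_l + x_l \prod_{i=1}^{l-1} m_{i+1}, \quad l = 2,\ldots,r,
\end{equation*}
where $\xi_l \in (x_1,\ldots,x_{l-1})$ and each $m_{i+1}$ is a monomial in generators of the ideals of the elements $H \in \cE^i(a)$ raised to the powers $\mu_{H,i+1}(a)$. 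The second step is to invoke condition~(3): since every $\mu_{H,i+1}(a) = 0$ for $i \geq 1$ and $H \in \cE^i(a)$, every $m_{i+1}$ is empty and so equals $1$. The normal form therefore collapses to
\begin{equation*}
g_1 = x_1, \qquad g_l = \xi_l + x_l, \quad l = 2,\ldots,r.
\end{equation*}

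The last step is to verify that $(g_1,\ldots,g_r,x_{r+1},\ldots,x_n)$ is itself a regular system of coordinates at $a$: because $\xi_l \in (x_1,\ldots,x_{l-1})$, the differential $dg_l$ at $a$ equals $dx_l$ plus a linear combination of $dx_1,\ldots,dx_{l-1}$, so the Jacobian of the new functions with respect to $(x_1,\ldots,x_n)$ at $a$ is upper triangular with $1$'s on the diagonal and hence invertible. In these new coordinates, each irreducible component of $X$ at $a$ and the support of each component of $E$ at $a$ is cut out by a single coordinate, which is exactly the snc condition of Definitions~\ref{def:snc}. I do not anticipate a serious obstacle in this theorem: the entire substance of the argument is already packaged inside Lemmas~\ref{lem:snc} and~\ref{lem:norm}, and Theorem~\ref{thm:char} amounts to observing that condition~(3) trivializes the monomial factors in the normal form, after which upper-triangularity of the change of coordinates makes the snc structure visible.
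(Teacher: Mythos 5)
Your overall route is exactly the paper's: the authors dispose of Theorem~\ref{thm:char} with the single line ``Immediate from Lemmas~\ref{lem:snc} and \ref{lem:norm}'', and your forward direction (snc $\Rightarrow$ $a\in\Sigma_p$, then quote Lemma~\ref{lem:snc}) and your converse (Lemma~\ref{lem:norm} plus condition~(3) killing the monomials $m_{i+1}$, followed by the unipotent change of coordinates $x_l\mapsto g_l$) are the intended content. The triangularity argument for the coordinate change is correct.

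There is, however, one overclaim in your last step. You conclude that in the coordinates $(g_1,\ldots,g_r,x_{r+1},\ldots,x_n)$ ``the support of each component of $E$ at $a$ is cut out by a single coordinate.'' But the normal form of Lemma~\ref{lem:norm} only accounts for the elements of $E^1(a)\cup\cdots\cup E^d(a)$; the components of $E$ at $a$ lying in $\cE^r(a)=E(a)\setminus\bigl(E^1(a)\cup\cdots\cup E^d(a)\bigr)$ (exceptional divisors that are ``new'' with respect to every block) are not among the $g_l$, and nothing you have written shows that they are coordinate hyperplanes in your new system. Since Definition~\ref{def:snc} requires \emph{all} components of $E$ at $a$ to be simultaneously normalized together with the components of $X$, this case must be addressed. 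It is true, but for a reason external to the normal form: the desingularization algorithm arranges that each maximal contact subspace $N^k=(x_1=\cdots=x_k=0)$ has only normal crossings with the new exceptional divisors $\cE^k(a)$ (and $E_m$ itself is snc), so the coordinates $x_1,\ldots,x_r$ of Lemma~\ref{lem:norm} may be taken so that the elements of $\cE^r(a)$ appear among $(x_{r+1}=0),\ldots,(x_n=0)$; one should also check that this property survives your substitution $x_l\mapsto g_l$ (it does, since $g_l\equiv x_l$ modulo $(x_1,\ldots,x_{l-1})$). Adding a sentence invoking this transversality closes the gap; otherwise the argument is complete and faithful to the paper.
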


\begin{proof}
Immediate from Lemmas \ref{lem:snc} and \ref{lem:norm}.
\end{proof}

We will need the following simple lemma (see \cite[\S5.4]{BMmin}).

\begin{lemma}\label{lem:equivproductsum}
Let $a \in N \subset Z$, where $N$ is a smooth subvariety of a neighbourhood of $a$, and let $f_1,\ldots,f_p$ denote regular functions of order one on $N$. Then the marked ideals $\underline{\mathcal{I}}_1=(Z,N,\emptyset,(f_1\dotsm f_p), p)$ and $\underline{\mathcal{I}}_2=(Z,N,\emptyset,(f_1,\ldots,f_p),1)$ are equivalent. (In particular,
any sequence of blowings-up which is admissible for one is also admissible for the other.)
\end{lemma}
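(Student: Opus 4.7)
The plan is to check equivalence in the sense of \cite[\S5.4]{BMmin} directly: that $\ucI_1$ and $\ucI_2$ have the same cosupport in a neighbourhood of $a$, and that their controlled transforms under any admissible blowing-up again have the same shape, so that the verification is inductive.

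First, since $\ord_a f_i = 1$ for each $i$ and the order function is upper-semicontinuous on $N$, there is a neighbourhood $U$ of $a$ on which $\ord_x f_i \le 1$ for every $i$. On $U$,
\[
\ord_x(f_1 \cdots f_p) \;=\; \sum_{i=1}^p \ord_x f_i \;\ge\; p \iff \ord_x f_i = 1 \text{ for all } i \iff f_i(x) = 0 \text{ for all } i,
\]
and the last condition is exactly $\cosupp \ucI_2$. Hence $\cosupp \ucI_1 = \cosupp \ucI_2$ near $a$. Since admissibility of a blowing-up depends only on the cosupport together with $E$, the two marked ideals share the same class of admissible blowings-up.

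Next I would check that the controlled transforms preserve the form of the lemma. Let $\s\colon Z' \to Z$ be admissible, with centre $C \subset \bigcap_i V(f_i)$. Locally the ideal of $\s^{-1}(C)|_{N'}$ is generated by a single element $y$, and since $f_i \in \cI_C$ we may write $\s^* f_i = y \cdot \tilde f_i$ with $\tilde f_i \in \cO_{N'}$. Then
\[
\ucI_1' = (Z', N', E', (\tilde f_1 \cdots \tilde f_p), p), \qquad \ucI_2' = (Z', N', E', (\tilde f_1, \ldots, \tilde f_p), 1),
\]
which is again the hypothesis of the lemma for the $\tilde f_i$. Upper-semicontinuity of order gives $\ord_{x'} \tilde f_i \le 1$ at every point $x'$ of the common cosupport, so the cosupport identity reapplies to the transforms, and induction on the length of an admissible sequence completes the proof. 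The only delicate bookkeeping is the case where some $\tilde f_i$ is a unit at a point: it then contributes $0$ to the order of the product and trivialises the sum-ideal, so both cosupports are empty there, consistently with one another.
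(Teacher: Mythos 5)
The paper offers no proof of Lemma \ref{lem:equivproductsum}; it is introduced as a ``simple lemma'' with a pointer to \cite[\S 5.4]{BMmin}, so there is no internal argument to compare against. Your proof is the right one in substance, and it correctly isolates where the hypothesis $\ord_a f_i=1$ enters: without $\ord_x f_i\le 1$ the cosupport of $(f_1\cdots f_p,p)$ could strictly contain that of $((f_1,\ldots,f_p),1)$, and the additivity $\ord_x(f_1\cdots f_p)=\sum_i\ord_x f_i$ together with the computation of the controlled transforms ($y^{-p}\s^*(f_1\cdots f_p)=(\tilde f_1\cdots\tilde f_p)$ with marking $p$, versus $y^{-1}\s^*(f_1,\ldots,f_p)=(\tilde f_1,\ldots,\tilde f_p)$ with marking $1$) reduces everything to an induction on the length of the test sequence.

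Two points should be tightened. First, equivalence in the sense of \cite[\S 5.4]{BMmin} is defined using test sequences that include not only admissible blowings-up but also products with affine lines; this stronger notion is what the paper actually needs, since Lemma \ref{lem:equivproductsum} is invoked in the proof of Lemmas \ref{lem:snc} and \ref{lem:norm} to transfer a maximal contact hypersurface (and hence the coefficient ideal) from $\underline{\mathcal{I}}_2$ to $\underline{\mathcal{I}}_1$, and these are invariants of the full equivalence class rather than of the collection of admissible blowing-up sequences alone. Your induction extends verbatim --- pulling back along $N\times\IA^1\to N$ preserves the orders of the $f_i$ and the product/sum shape --- but the case must be included. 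Second, the inequality $\ord_{x'}\tilde f_i\le 1$ at points of the exceptional divisor does not follow from upper semicontinuity of order on $N'$ alone (a priori the locus $\{\ord\tilde f_i\ge 2\}$ could be a divisor contained in $\s^{-1}(C)\cap N'$); the correct justification is the standard fact that the order of the controlled transform does not increase under an admissible blowing-up, which applies here because the centre lies in the common cosupport, where $\ord f_i$ equals the marking $1$. With these two repairs the argument is complete.
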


\begin{proof}[Proof of Lemmas \ref{lem:snc} and \ref{lem:norm}]
Consider $(X,E) = (X_m,E_m)$ in some year $m$ of the resolution history \eqref{eq:blup},
and consider $a \in \Sigma_p(X)$. Let $f_k$, $k=1,\ldots,p$, denote generators of the ideals of the components of $X$ at $a$. The computation of $\inv(a)$ begins with the
marked ideal $\ucI^0 = (Z,Z,E(a),(f_1\cdots f_p),1)$. Clearly, $\inv_{1/2}(a) = p$
if and only if $\ord_a f_k = 1$ for all $k$. Assume the latter. Then the companion
ideal $\ucJ^0 = (E(a),(f_1\cdots f_p),p)$. Any $(f_k = 0)$, say $(f_1=0)$, defines a 
hypersurface of maximal contact $N^1$ for $(\cE(a),(f_1\cdots f_p),p)$ or, equivalently, 
$(E(a),(f_1,\ldots, f_p),1)$. (This is a consequence of Lemma \ref{lem:equivproductsum} and the fact that,
if $m'$ is the year of birth of $\inv_{1/2}(a)$, then
$X_{m'}$ has $p$ components of order 1 at $a_{m'}$.) Let $x_1=g_1:=f_1$.

Let $u_1^j$, $j=1,\ldots, s_1$, denote generators of the ideals of the elements
of $E^1(a)$. Then the coefficient ideal plus boundary is 
\begin{equation}\label{eq:coeff}
\ucI^1 = (Z,N^1,\cE^1(a),(f_2,\ldots,f_p,u_1^1,\ldots,u_1^{s_1})|_{N^1},1),
\end{equation}
where $\cE^1(a) = E(a)\setminus E^1(a)$. (Note, in particular, that the associated multiplicity is 1.)

We factor $\ucI^1$ as the product $\cM(\ucI^1)\cdot\cR(\ucI^1)$ of its monomial
and residual parts; in particular, $\cM(\ucI^1)$ is generated by a monomial $m_2$
in the components of $\cE^1(a)$.

First suppose that $(X,E)$ is snc at $a$.
Then the generators of $\cI^1$ in \eqref{eq:coeff} are part
of a regular coordinate system. It follows that $\cM(\ucI^1) = 1$ (since none of
these generators define elements of $\cE^1(a)$); i.e., all 
$\mu_{H,2}(a) = 0$. Since $\ucI^1$ has maximal order, $\inv_{3/2}(a) = (p,s_1,1)$, and
the next companion ideal $\ucJ^1 = \ucI^1$. 

We can then continue as before,
choosing the $f_k$ and the $u_{i}^{j}$ successively as hypersurfaces of maximal contact
to pass to the coefficient ideal plus boundary 
$\underline{\mathcal{I}}^\ell$, $\ell = 2,\ldots$\,. 
At each step, $\cM(\ucI^\ell) = 1$ (in particular, $\mu_{H,\ell +1}(a)=0$ for every $H$),
and $\underline{\mathcal{I}}^\ell$ is of maximal order, $=1$. 
Therefore, $\nu_{\ell +1}=1$ and 
$\underline{\mathcal{I}}^\ell$ equals the following companion ideal 
$\underline{\mathcal{J}}^\ell$.  
Once all $f_k$ and $u_{i}^{j}$ have been used as hypersurfaces of maximal contact, we 
get coefficient ideal $= 0$. Therefore, $\inv(a)$ has last entry $=\infty$ and $r$ pairs
before $\infty$. This completes the proof of Lemma \ref{lem:snc}.
\smallskip

Now consider the converse direction, for Lemma \ref{lem:norm}. If $\inv_{3/2}(a) = (p,s_1,1)$, then there exists $g_2\in\{f_2,\ldots,f_p,u_1^1,\ldots,u_1^{s_1}\}$ such that $x_2 := m_2^{-1}\cdot g_2|_{N^1}\in \mathcal{R}(\underline{\mathcal{I}}^1)$ has order 1 at $a$, and the next
companion ideal $\ucJ^1 = (Z,N^1,\cE^1(a),\cR(\ucI^1),1)$. We can take $N^2 :=
(x_2=0) \subset N^1$ as the next maximal contact subspace, and write
$g_2 = \xi_2 + x_2m_2$, where $\xi_2 \in (x_1)$. Then the coefficient ideal plus
boundary is
\begin{equation*}
\ucI^2 = \left(Z, N^2, \cE^2(a)=\cE^1(a)\setminus E^2(a), 
\left(\cR(\ucI^1) + (u_2^1,\ldots,u_2^{s_2})\right)|_{N^2}, 1\right).
\end{equation*}

We can again repeat the argument. At each step $\ell \geq 2$, 
the ideal $\mathcal{I}^\ell$ factors as $\mathcal{M}(\underline{\mathcal{I}}^\ell)\cdot\mathcal{R}(\underline{\mathcal{I}}^\ell)$ where $\mathcal{M}(\underline{\mathcal{I}}^\ell)$ is generated by a monomial $m_{\ell+1}$ in the components of $\cE^\ell (a) = E(a)\setminus E^1(a)\cup...\cup E^\ell (a)$. If the truncated invariant
$\inv_{\ell + 1/2}(a) = (\inv_{p,s})_{\ell + 1/2}$ (in particular, $\nu_{\ell +1}(a)=1$), then 
$\mathcal{R}(\underline{\mathcal{I}}^\ell)$ has maximal order 1.  Therefore, the next companion ideal $\underline{\mathcal{J}}^\ell = (Z,N^\ell,\cE^\ell(a),
\mathcal{R}(\underline{\mathcal{I}}^\ell), 1)$, where
$N^\ell = (x_1=\ldots=x_{\ell}=0)$. Therefore, there is $g_{\ell +1}\in\{f_2,\ldots,f_p,u_1^1,u_1^2,\ldots,u_{\ell}^{s_\ell}\}$ such that 
$x_{\ell +1} := \prod_{i=1}^{\ell}m_{i+1}^{-1}\cdot g_{\ell +1}|_{N^\ell}\in 
\mathcal{R}(\underline{\mathcal{I}}^\ell)$ and has order 1. We can take $(x_{\ell +1} = 0)$ 
as the next hypersurface of maximal contact $N^{\ell +1} \subset N^{\ell}$ 
and write $g_{\ell +1}=\xi_{\ell +1}+x_{\ell +1}\prod_{i=1}^{\ell}m_{i+1}$, for some 
$\xi_{\ell +1}\in(x_1,\ldots, x_{\ell})$. 

If $\inv(a) = inv_{p,s}$, then the process ends after $r=p+|s|$ steps (i.e., $r$ successive choices of maximal contact) with $\underline{\mathcal{I}}^{r}=0$. 
\end{proof}

\begin{remark}\label{rem:mult1} 
In the proof above, we have noted that, if $a\in\Sigma_p$ and the truncated invariant
$\inv_{k+1/2}(a)=(\inv_{p,s})_{k+1/2}$, where $0\leq k<r=p+|s|$, then, for every 
$\ell \leq k+1$, the coefficient ideal plus boundary $\underline{\mathcal{I}}^\ell$ (or an equivalent marked ideal) has associated multiplicity $=1$.
\end{remark}


\section{Cleaning}\label{sec:clean}
According to Theorem \ref{thm:char}, if $a\in\Sigma_{p}$ and $\inv(a)=\inv_{p,s}$, 
then $(X,E)$ is snc at $a$ if and only if the invariants $\mu_{H,k+1}(a)=0$, for every 
$k\geq1$. In this section we study the \emph{cleaning} blowings-up necessary to get the latter condition. 

The centres of cleaning blowings-up are not necessarily $\inv$-admissible. In the general cleaning algorithm of \cite[Sect.\,2]{BMmin}, therefore, $\inv$ is not defined
in a natural way over a cleaning sequence, so that, after cleaning, we assume
we are in year zero for the definition of $\inv$. Over the particular cleaning
sequences needed here, however, we can define a modified $\inv$ which
remains semicontinuous and infinitesimally semicontinuous, and show that maximal
contact subspaces exist in every codimension involved; this is a
consequence of Lemma \ref{lem:norm} and Remark \ref{rem:mult1} (see Remarks
\ref{rem:clean}).

Consider a point $a$ in the locus $S := (\inv_k = (\inv_{p,s})_k)$, where  $k\geq 1$
(in any year $m$ of the resolution history \eqref{eq:blup}). In some neighbourhood of $a$,
$S$ is the cosupport of a marked ideal (a coefficient ideal 
plus boundary) $\underline{\mathcal{I}}^k = (\mathcal{I}^k,d^k)
=(Z,N^k,\mathcal{E}^k(a),\mathcal{I}^k,d^k)$, where $N^k$ is a maximal contact subspace
of codimension $k$ and $d^k=1$ (see Remark \ref{rem:mult1}). Recall that 
$\mathcal{E}^k(a)=E(a)\setminus E^1(a)\cup ...\cup E^{k}(a)$. $\mathcal{E}^k(a)$
is the block of \emph{new} components of the exceptional divisor (necessarily transverse
to $N^k$) and the \emph{old} components in the block $E^{k}(a)$ define the boundary.

The ideal $\mathcal{I}^k = \mathcal{M}(\underline{\mathcal{I}}^k )\cdot\mathcal{R}(\underline{\mathcal{I}}^k )$ (the product of its monomial and residual parts). The
monomial part $\mathcal{M}(\underline{\mathcal{I}}^k )$ is the product of the ideals 
$\mathcal{I}_H|_{N^k}$ (where $H \in \cE^k(a)$), each to the power $\mu_{H,k+1}(a)$ 
(since $d^k=1$).

Let $\underline{\mathcal{M}}(\underline{\mathcal{I}}^k)$ denote the monomial
marked ideal $(\mathcal{M}(\underline{\mathcal{I}}^k), d^k)
= (\mathcal{M}(\underline{\mathcal{I}}^k), 1)$. Then $\cosupp \underline{\mathcal{M}}(\underline{\mathcal{I}}^k) \subset \cosupp \ucI^k$ and any admissible sequence of
blowings up of $\underline{\mathcal{M}}(\underline{\mathcal{I}}^k)$ is admissible
for $\ucI^k$. Cleaning is provided by desingularization of the monomial marked ideal
$\underline{\mathcal{M}}(\underline{\mathcal{I}}^k)$ \cite[Sect.\,5, Step II, Case A]
{BMfunct}, \cite[Sect.\,2]{BMmin};
see Definition \ref{def:clean} following.

\begin{remark}\label{rem:sigma}
First note that, if $X$ has constant order on an irreducible subvariety $C$, then,
for any $p$, either $C \subset \Sigma_p(X)$ or $C \cap \Sigma_p(X) = \emptyset$
(by semicontinuity of order).
\end{remark}

\begin{definition}\label{def:clean}
\emph{Cleaning} of the locus $S = (\inv_k = (\inv_{p,s})_k)$ means the sequence
of blowings-up obtained from desingularization of the monomial marked ideal
$\underline{\mathcal{M}}(\underline{\mathcal{I}}^k)$ (in a neighbourhood of
any point of $S$) by using, at each step, 
only the components of the centres of blowing up which lie in $\Sigma_p$.
\end{definition}

The centres of the cleaning blowings-up are invariantly
defined closed subspaces of $(\inv_k \geq (\inv_{p,s})_k)$.

\begin{remarks}\label{rem:clean}
The blowings-up $\s$ involved in  desingularization of $\underline{\mathcal{M}}(\underline{\mathcal{I}}^k)$ are admissible: Let $C$ denote the centre of $\s$.
Then $C$ is snc with respect to $E$
because, in the notation above, $C$ lies inside every element of $E^1(a) \cup \cdots E^k(a)$ and $C$ is snc with respect to $\cE^k(a)$. Since $C \subset S$, it follows
that $\s$ is $\inv_k$-admissible. By Lemma \ref{lem:snc}, $C$
contains no snc points (since some $\mu_{H,k+1}(a) \neq 0$, for all $a \in C$). 

Since $d_k=1$, $C$ is of the form $N^k \cap H$, for a single $H \in \cE^k(a)$;
i.e., $C$ is of codimension $1$ in $N^k$. Therefore, $\s$ induces an isomorphism
$(N^k)' \to N^k$, where $(N^k)'$ denotes the strict transform of $N^k$.
\end{remarks}  

\begin{lemma}\label{lem:clean}
Assume that $\inv \leq \inv_{p,s}$ on $X = X_m$, in some year $m$ of the desingularization history. Consider the cleaning sequence for
$S = (\inv_k = (\inv_{p,s})_k)$ (Definition \ref{def:clean}). Then, over the cleaning
sequence, we can define maximal contact subspaces of every codimension involved, 
as well as (a modification of) $\inv$ which remains both semicontinuous and infinitesimally semicontinuous.
\end{lemma}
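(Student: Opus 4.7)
The plan is to use the local normal form of Lemma \ref{lem:norm}, available at every point of $S$, as a bookkeeping device to track both the maximal contact chain and the monomial contributions through the cleaning sequence. At any $a \in S$, Lemma \ref{lem:norm} together with Remark \ref{rem:mult1} supplies regular coordinates $(x_1,\ldots,x_n)$ in which each $g_l$ has the normal form $g_l = \xi_l + x_l\prod_{i=1}^{l-1}m_{i+1}$, and in which the maximal contact subspaces used in the standard computation of $\inv(a)$ are the coordinate subspaces $N^j = (x_1 = \cdots = x_j = 0)$. By Remarks \ref{rem:clean}, every cleaning centre $C$ has codimension one in $N^k$ and is of the form $N^k \cap H$ for a single $H \in \cE^k(a)$; hence the blow-up $\sigma$ induces isomorphisms $(N^j)' \to N^j$ on strict transforms for every $j \leq k$, and I take these as maximal contact in codimensions $1, \ldots, k$ after cleaning.

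For codimensions beyond $k$, I would propagate the remaining $N^j$ directly from the normal form. In each chart of $\sigma$, the strict transform of $g_{l+1}$ (for $l \geq k$) still has the form $\tilde\xi_{l+1} + x_{l+1}\prod_{i=1}^{l} \tilde m_{i+1}$, where $\tilde m_{i+1} = m_{i+1}$ for $i < k$ and $\tilde m_{i+1}$ is obtained from $m_{i+1}$ by the standard transformation rule for $i \geq k$; extracting $x_{l+1}$ by dividing out the new monomial exactly as in the proof of Lemma \ref{lem:norm}, I set $(N^{l+1})' := (N^l)' \cap (x_{l+1} = 0)$, which furnishes maximal contact subspaces in every codimension up to $r$. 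The modified $\inv$ at a point $a' \in S'$, the strict transform of $S$, is then read off from the transformed normal form: the truncation $\inv_k(a')$ equals $(\inv_{p,s})_k$ by $\inv_k$-admissibility; by convention the block sizes $s_{k+1}, \ldots, s_d$ and orders $\nu_{k+1}, \ldots$ are kept equal to their values at $a$, so that each exceptional divisor newly created by cleaning is counted within the existing block $\cE^k$ rather than opening a new block, and the $\mu_{H,i+1}(a')$ are read off from the $\tilde m_{i+1}$. Off $S'$, the modified $\inv$ agrees with its standard truncation $\inv_k$.

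Semicontinuity and infinitesimal semicontinuity of $\inv_k$ are standard, since each cleaning blow-up is $\inv_k$-admissible by Remarks \ref{rem:clean}. For the higher entries, the coefficient-ideal computation on $(N^j)'$ is pulled back by the isomorphism induced by $\sigma$ to the corresponding computation on $N^j$, so semicontinuity transfers verbatim, and infinitesimal semicontinuity across a cleaning blow-up follows from the convention described above. The step I expect to be the main obstacle is verifying that this convention yields a modified $\inv$ that is well-defined independently of the choices made in Lemma \ref{lem:norm}, in particular that the classification of newly created exceptional divisors into $\cE^k$ (rather than a fresh block) is compatible with the iterative computation at subsequent points; I would reduce this to the fact that the centres of the cleaning sequence are intrinsic to the monomial marked ideal $\underline{\cM}(\ucI^k)$, which is itself extracted invariantly from $\ucI^k$.
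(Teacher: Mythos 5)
Your overall strategy --- take strict transforms of the maximal contact chain, exploit the isomorphism $(N^k)' \to N^k$, and keep the block sizes $s_{k+1},\ldots,s_d$ fixed by convention --- is the same as the paper's, but you have not identified the one point where the standard algorithm actually breaks and where the ``modification'' of $\inv$ is forced. The issue is transversality, not well-definedness. In the standard algorithm the maximal contact subspace at level $k+1$ must be transverse to the block $\cE^{k+1}(a')$ of \emph{new} exceptional components, and a freshly created exceptional divisor always lands in that new block. Here the new divisor is $\tH = \s^{-1}(C)$ with $C = N^k \cap H$ for a single $H \in \cE^k(a)$, and there are two cases. If $H \notin E^{k+1}(a)$, then $H$ is transverse to $N^{k+1}$, hence $\tH$ is transverse to $(N^{k+1})'$ and nothing needs to be modified. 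But if $H \in E^{k+1}(a)$ --- the \emph{old} sub-block at level $k+1$, to which the maximal contact hypersurface need not be transverse (that is precisely why the boundary is added to the coefficient ideal) --- then $\tH$ need not be transverse to $(N^{k+1})'$, and placing $\tH$ in $\cE^{k+1}(a')$ would invalidate $(N^{k+1})'$ as a maximal contact subspace. The paper's fix is to declare $\tH$ an element of $E^{k+1}(a')$, viewing $(N^k)'\cap\tH$ as a relabelling of $N^k\cap H$; this is what simultaneously gives $s_{k+1}(a')=s_{k+1}(a)$ and keeps $\cE^{k+1}(a')$ transverse to $(N^{k+1})'$. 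Your statement that the new divisor is ``counted within the existing block $\cE^k$'' does not distinguish $E^{k+1}$ from $\cE^{k+1}$ inside $\cE^k$, and your normal-form construction of $(N^{l+1})'$ silently assumes that the transformed monomials $\tilde m_{i+1}$ involve only divisors transverse to the transformed chain --- which is exactly the assertion that fails in the second case unless the relabelling is made explicit. The obstacle you flag (independence of the choices in Lemma \ref{lem:norm}) is not the real one; the paper sidesteps it by working with equivalence classes of the marked ideals $\ucI^k$, $\ucJ^k$ rather than with a chosen normal form.

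A secondary gap: Lemma \ref{lem:norm} provides the full chain $g_1,\ldots,g_r$ only at points where the \emph{entire} invariant equals $\inv_{p,s}$ and $a\in\Sigma_p(X)$, whereas the cleaning locus $S=(\inv_k=(\inv_{p,s})_k)$ is cut out by a truncation. At points of $S$, or of $\s^{-1}(a)$, where the invariant is strictly smaller, you cannot invoke the normal form to manufacture maximal contact in all codimensions up to $r$; the paper instead continues from $\inv_k$ by the usual algorithm (treating this as ``year zero'' for the continuation) wherever the truncated invariant drops, and uses only Remark \ref{rem:mult1} (associated multiplicity $1$), not the full normal form, on the locus where it does not.
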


\begin{proof}
Consider the first blowing up $\s$ in the cleaning sequence, at a point $a$
as above. Let $C$ denote the centre of $\s$.

Since $d^k=1$, $C\subset N^k$ is given by $H\cap N^k$, for some 
$H\in\mathcal{E}^k(a)$. Let $(\underline{\mathcal{I}}^k)' = 
(Z',(N^k)',(\mathcal{E}^k)',(\mathcal{I}^k)',1)$ denote the transform of 
$\ucI^k$ by $\s$ (in particular, $Z = Z_{m+1}$; i.e., this is year $m+1$).
Then $\s|_{(N^k)'}: (N^k)' \to N^k$ is an isomorphism (which
we consider to be the identity), so that $\s^*(\cI^k) = \cI^k$. Therefore
$(\cI^k)' = \cI_H^{-1}\s^*(\cI^k) = \cI_H^{-1}\cI^k$ and
$$
\cR((\cI^k)') = \cR(\cI^k), \quad \cM((\cI^k)') = \cI_H^{-1}\cM(\cI^k).
$$

Over the complement of $C$, $\s$ is an isomorphism and $\inv$ will be
unchanged. Consider $a'\in \sigma^{-1}(a)$. 
If $a' \notin (N_k)'$, then $\inv_k(a')<\inv_k(a)$, by the definition of maximal contact
(since $\s$ is $\inv_k$-admissible). For any $a'\in \sigma^{-1}(a)$ such that
$\inv_k(a')<\inv_k(a)$,
$\inv_k(a')$ can be extended to $\inv(a')$ as in the usual desingularization algorithm. 
(This is ``year zero" for $\inv_k(a')$.)

On the other hand, suppose $a\in (N_k)'$ and $\inv_k(a')=\inv_k(a)$. Following the standard desingularization
algorithm, we set $\nu_{k+1}(a')= \ord_{a'}\cR((\cI^k)') = \nu_{k+1}(a)$. Then
$\inv_{k+1/2} = (\inv_k,  \nu_{k+1})$ is semicontinuous on $Z' = Z_{m+1}$ and
infinitesimally semicontinuous through to year $m+1$. The exponents 
$\mu_{H,k+1}(a')$ in $\cM((\cI^k)')$, and $\nu_{k+1}(a')$ are invariants of the
equivalence class of $(\ucI^k)'$ (cf. \cite[Def.\,5.10 ff.]{BMmin}).

Recall that the companion ideal $\ucJ^k$ of $\ucI^k$ is $(\cR(\cI^k),1)$ (as in the proof
of Lemma \ref{lem:norm}). The equivalence class of the companion ideal
$\ucJ_{m+1}^k = (\ucJ^k)'$ of $\ucI_{m+1}^k = (\ucI^k)'$ depends 
only on the equivalence
class of $(\ucI^k)'$ (cf. \cite[Cor.\,5.3]{BMfunct}).

If $\nu_{k+1}(a)>0$, then $\cJ^k$ admits a maximal contact hypersurface 
$N^{k+1} \subset N^k$ at $a$ (relative to $\cE^{k+1}(a)$; in particular, $N^{k+1}$ is transverse to $\cE^{k+1}$ at $a$). 
If we were to follow the standard desingularization algorithm,
then we would take $s_{k+1}(a'): = \#E^{k+1}(a')$, where $E^{k+1}(a')$ is the set of
strict transforms at $a'$ of elements of $E^{k+1}(a)$, and we would include the
exceptional divisor $\tH = \s^{-1}(C)$ of $\s$ in $\cE^{k+1}(a')$. There are two cases to be considered. 
\smallskip

\noindent
(1) $H\notin E^{k+1}(a)$. Then $s_{k+1}(a')=s_{k+1}(a)$. Since $H \in \cE^{k+1}(a)$, 
$H$ is transverse to $N^{k+1}$. Therefore, $\tH$ is transverse to $(N^{k+1})'$,
and $(N^{k+1})'$ is a valid maximal contact subspace of codimension $k+1$ at $a'$.
\smallskip

\noindent
(2) $H\in E^{k+1}(a)$. Then $H$ is not necessarily transverse to $N^{k+1}$ and
$\tH := \s^{-1}(C)$ is not necessarily transverse to 
$(N^{k+1})'$, so that $(N^{k+1})'$ is not valid as a maximal contact subspace of
codimension $k+1$ at $a'$. Instead, we modify the definition of $\inv_{k+1}$
and the algorithm in this case by continuing
to count $\tH$ as an element of $E^{k+1}(a')$ (so we are viewing $(N^k)' \cap \tH$
simply as a relabelling of $N^k \cap H$; i.e., $H$ is moved away from $N^k$ and
replaced by $\tH$). Thus $s_{k+1}(a') = s_{k+1}(a)$ and $\cE^{k+1}(a')$ is still
transverse to $(N^{k+1})'$.
\smallskip

In both cases, it follows that the equivalence class
of the coefficient ideal plus boundary $\ucI_{m+1}^{k+1}$ associated to 
$\ucJ_{m+1}^k = (\ucJ^k)'$
depends only on the equivalence class of $\ucJ_{m+1}^k$ and $E^{k+1}(a')$
\cite[Sect.\,5, Step I, Case B]{BMfunct}. Therefore, $\inv_{k+1} = (\inv_k, \nu_{k+1},
s_{k+1})$ extends as in the standard desingularization algorithm to $\inv$ on $Z_{m+1}$.
Clearly, if $\inv_k(a') = \inv_k(a)$, where $a \in C$ as above, then $\inv(a') = \inv(a)$.

The argument above can be iterated over the full sequence of cleaning blowings-up.
The order of the blowings-up in the cleaning sequence (i.e., in desingularization of
the corresponding monomial marked ideal) is determined by the ordering of
$\cE^k$ given by the year of birth of each of its elements see \cite[Sect.\,5, Step II,
Case A]{BMfunct}.
\end{proof}

\begin{remark}\label{rem:why}
After cleaning the loci $(\inv_k = (\inv_{p,s})_k)$, for all $k$, $(\inv = \inv_{p,s})$
becomes snc. We will then continue to blow up with closed centres which lie in
the complement of $\{\text{snc}\}$ (Section \ref{sec:alg}). The purpose of defining
$\inv$ over the cleaning sequences is to ensure that, in the complement
of $\{\text{snc}\}$, we will only have to consider values $\inv_{p',s'} < \inv_{p,s}$
in order to resolve all but $\{\text{snc}\}$ after finitely many steps. If,
after cleaning $(\inv = \inv_{p,s})$, we were to apply the resolution algorithm in the
complement of $\{\text{snc}\}$, beginning as if in year zero, we might introduce
points where $\inv = \inv_{p',s'} > \inv_{p,s}$.
\end{remark}


\section{Algorithm for the main theorem}\label{sec:alg}
In this section we prove Theorem \ref{thm:main}. Let $\cS$ denote $\{\inv_{p,s}\}$ where $s=(s_1,...,s_d)$ and $p,d,|s|\leq n:= \dim Z$. Then $\cS$ is finite and totally ordered. 
Consider the following two steps.
\smallskip

\noindent
(1) Let $\inv_{p,s}$ be the maximum element of $\cS$. Follow the desingularization algorithm of \cite{BMinv}, \cite{BMfunct} to decrease
$\inv$ until $\inv$ is everywhere $\leq \inv_{p,s}$. Then blow-up any component of the locus $(\inv=\inv_{p,s})$ that contains only non-snc points. The result is that
$(X,E)$ is generically snc on every component of the locus $(\inv=\inv_{p,s})$. 
By Remark \ref{rem:sigma}, $(\inv=\inv_{p,s})\subset \Sigma_p(X)$.
\smallskip

\noindent
(2) Clean the locus 
$(\inv_k=(\inv_{p,s})_k)$, successively for each $k=r-1,r-2\ldots,1$, using Lemma
\ref{lem:clean}. Then all $\mu_{H,k}=0$, $2 \leq k \leq r$, so, by 
Theorem \ref{thm:char}, all points of $(\inv=\inv_{p,s})$ are snc. Therefore, $(X,E)$ is snc on $Y_{p,s}:= (\inv\geq \inv_{p,s})$, and hence in a neighbourhood of $Y_{p,s}$. 
\smallskip

We can therefore repeat the above two steps on $X_{p,s}:=X\setminus Y_{p,s}$,
replacing $\cS$ by $\cS\setminus\{\inv_{p,s}\}$. The centres of all blowings-up involved are closed in $X$ since they are closed in $X_{p,s}$ and contain no snc points
(according to Theorem \ref{thm:char} and Remarks \ref{rem:clean}).

Since $\cS$ is finite, the process terminates after finitely many iterations, so that $(X,E)$ 
becomes everywhere snc.

The desingularization algorithm \cite{BMfunct} is functorial with respect to 
\'etale or smooth morphisms. The snc condition is preserved
by \'etale or smooth morphisms which preserve the number of irreducible components at every point. The functoriality statement in Theorem \ref{thm:main} is an immediate
consequence (cf. \cite[Sect.\,9]{BV}). This completes the proof. \hfill \qedsymbol

\begin{remark}\label{rem:final}
The algorithm above admits certain variations; e.g., (1) can be changed so 
that, when $\inv \leq \inv_{p,s}$, we blow up only components of 
$(\inv=\inv_{p,s})$ that are disjoint from $\Sigma_p(X)$ (Remark \ref{rem:sigma}
still applies).
\end{remark}

\begin{example}\label{ex2}
Consider $X = (x(x+yz)=0)$, $E = \emptyset$. The standard resolution algorithm A first blows up $C_0 = \{0\}$ and afterwards gives:

\renewcommand{\arraystretch}{1.5}
\begin{longtable}{c | c | c | c | c}
\hline
year $j$ & chart & marked ideal $\ucI^j$ & $\inv(0)$ & centre $C_j$\\\hline\hline
1 & $(xy,y,yz)$ & $(xy(x+yz), 1)$ & $(2,0,1,1,1,0,\infty)$ & $\{0\}$\\\hline
2 & $(xy,y,yz)$ & $(xy^2(x+yz), 1)$ & $(2,0,1,0,\infty)$ & $\{x=z=0\}$ \\\hline
\end{longtable}

\noindent
In year 1, $\inv(0)=\inv_{2,s}$, $s=(0,1)$, but Algorithm C (above)
also blows up $C_1 = \{0\}$.
In year 2, $\inv(0)=\inv_{2,0}$; Algorithm C (and B) continues with a cleaning blow-up, center $(x=y=0)$. The variant of C in Remark \ref{rem:final}, 
however, cleans up with centre $(x=y=0)$ 
in year 1. After either cleaning blow-up, we have snc over the chart shown.
\end{example}

\bibliographystyle{alpha}

\end{document}